\renewcommand{\vec}{\mathbf}
\newcommand{\eps}{\varepsilon}
\newcommand{\set}[1]{\left\{#1\right\}}
\newcommand{\abs}[1]{\left|#1\right|}
\newcommand{\infnorm}[1]{\left\|#1\right\|_{\infty}}
\newcommand{\R}{\mathds{R}}
\newcommand{\N}{\mathds{N}}
\newcommand{\UF}{\mathcal{U}}
\newcommand{\lexleq}{\leq_{\text{lex}}}
\newcommand{\lexlt}{\leq_{\text{lex}}}
\newcommand{\hr}[1]{\hat{#1}}  
\newcommand{\opt}[1]{{#1^*}} 
\newcommand{\F}{\mathcal{F}_{a,b}}
\newcommand{\HR}{{^*\R}}
\newcommand{\simplex}{\triangle}
\newtheorem{example}{Example}
\newtheorem{thm}{Theorem}[section]{\bfseries}{\rmfamily}
\newtheorem{lemma}[thm]{Lemma}{\bfseries}{\rmfamily}
\newtheorem{proposition}[thm]{Proposition}{\bfseries}{\rmfamily}
{\bfseries}{\rmfamily}
\newtheorem{definition}[thm]{Definition}{\bfseries}{\rmfamily}
{\bfseries}{\rmfamily}
\newtheorem{remark}[thm]{Remark}{\itshape}{\rmfamily}
\title{On Game Theory Using Stochastic Tail Orders
}
\newcommand{\email}{\texttt}
\author{Stefan Rass\thanks{Institute for Artificial Intelligence and Cybersecurity, Universitaet Klagenfurt, Klagenfurt, Austria
		(\email{stefan.rass@aau.at}).}
	\and Sandra König\thanks{Austrian Institute of Technology, Center for Digital Safety \& Security, Vienna, Austria (\email{sandra.koenig@ait.ac.at}, \email{stefan.schauer@ait.ac.at})}
	\and Stefan Schauer\footnotemark[2]
	\and Vincent B\"urgin\thanks{University of Passau, Faculty of Computer Science and Mathematics, Passau, Germany (\email{v.buergin@gmx.de}, \email{jeremias.epperlein@gmail.com}, \email{fabian.wirth@uni-passau.de})}
	\and Jeremias Epperlein\footnotemark[3]
	\and Fabian Wirth\footnotemark[3]
}
\begin{document}

\maketitle

\begin{abstract}
We consider a family of distributions on which natural tail orders can be constructed upon a  representation of a distribution by a (single) hyperreal number. Past research revealed that the ordering can herein strongly depend on the particular model of the hyperreals, specifically the underlying ultrafilter. Hence, our distribution family is constructed to order invariantly of an ultrafilter. Moreover, we prove that it lies dense in the set of all distributions with the (same) compact support, w.r.t. the supremum norm. Overall, this work presents a correction to \cite{rass_game-theoretic_2015, rass_decisions_2016}, in response to recent findings of \cite{burgin_remarks_2021}. 
\end{abstract}



\section{Introduction}
While classical game theory concerning finite games with payoffs as matrices over $\R$ has been deeply studied and applied in many branches of science, applications in security and risk management motivated the study of analogue games taking their payoffs in the \emph{hyperreal} space $\HR$ .
Among the reasons to look at games over $\HR$ is the need to base decisions on objects carrying more information than just a number, and probability distributions are natural candidates here. A large class of practically relevant univariate distributions can be described uniquely in terms of their moment sequence, so that (via a Taylor series expansion of the characteristic function), we can uniquely associate an univariate distribution function $F:\Omega\to\R$ for a random variable $X$ with its moment sequence $m_X(n) := \int_\Omega x^ndF(x)$. In this way, we can model losses in a game by probability distributions, thus including more information than could be encoded in a simple real-valued payoff score, by letting the random variable $X\sim F$ be represented by an infinite sequence of moments that we can interpret as a hyperreal number. It can be shown that the usual construction of matrix games optimizing average gains or losses translates into an optimization of a mixed distribution describing the losses suffered from randomized actions in the game, but now described not as a scalar value, but rather as a whole loss distribution (using the law of total probability) \cite{rass_decisions_2016}.

\textit{Notation}\\
Throughout this work, we will let $\hr{x}$ denote a hyperreal value, as opposed to $x$ denoting a value from $\R$. Consistently with the literature on game theory, we let optimal values in either structure appear as $\opt{\hr{x}}$ or $\opt{x}$, respectively. Vectors and matrices over either field appear in bold.

Since the representation of hyperreals as number sequence is a quotient of the entire set of sequences, here denoted as $\R^\infty$, modulo an ultrafilter $\UF$, the natural ordering of best decisions would come as the $\leq$-ordering of $\HR$, which generally depends on $\UF$. Past work \cite{rass_decisions_2016} has proposed conditions under which this ordering was hoped to be total and independent of $\UF$ by exhibiting the $\leq$-order in $\HR$ as equivalent to a stochastic tail-order on the distributions. Formally, \cite{rass_decisions_2016} proposed the following stochastic order:
\begin{definition}\label{def:hyrim-order}
	Let $\HR=(\R^\infty\slash\UF,\leq)$ be an instance of the hyperreal space using an ultrafilter $\UF$, and with $\leq$ being the induced total order. Let $X,Y$ be univariate real-valued random variables, supported on a (common) compact subset $[a,b]\subset[1,\infty)\subset\R$. We put the two random variables into the order relation $X\preceq Y$ if and only if the hyperreal numbers defined from the respective moment sequences, i.e., $\hr{x}=(m_X(n))_{n\in\N}$ and  $\hr{y}=(m_Y(n))_{n\in\N}$ satisfy $\hr{x}\leq \hr{y}$ within $\HR$.
\end{definition}

The proposal in \cite{rass_decisions_2016} attempted to classify a set of distributions that are totally ordered under $\preceq$, and for which the ordering was independent of the choice of $\UF$. This would have delivered a ``natural'' lift of games with distributions as payoffs into an analogue of classical matrix games, only played inside $\HR$. However, it was found later in \cite{burgin_remarks_2021} that the conditions were too weak to equate the $\leq$-order in $\HR$ to a -- more interpretable -- stochastic tail order. Specifically, the examples of unimodal (even monotone) distributions constructed in \cite{burgin_remarks_2021} exhibit some oscillatory behavior that precluded it from comparing in the same $\leq$-sense under all ultrafilters, thus showing that the dependency on $\UF$ is still there. As a second observation of \cite{burgin_remarks_2021}, the practical computation of Nash equilibria in $\HR$ is more involved, as the usual convergence results known for classical games, upon application to games in $\HR$, may fail (as already recognized earlier in \cite{rass_game-theoretic_2015}). Indeed, the computation of equilibria along known direct or iterative (online-learning) algorithms delivers something that is \emph{not} a Nash equilibrium. This is shown by an instructive counterexample due to \cite{burgin_remarks_2021}, which we will repeat it later as Example \ref{exa:vincents-example}.

The purpose of this note is twofold: first, we give more stringent conditions on the class of distributions (Definition \ref{def:set-f-ab}) upon which we construct games to avoid the unpleasant phenomena reported in \cite{burgin_remarks_2021}. Specifically, we will work with piecewise polynomial densities on compact supports to regain a stochastic tail order that is total and independent of $\UF$ when defined as the $\leq$-order within $\HR=\R^\infty\slash\UF$ (Lemma \ref{lem:ordering-is-natural}). The second purpose is to study the equilibria concept related to the tail ordering, and to demonstrate that the conventional notion of a Nash equilibrium does not directly apply in this setting. Further, we discuss the interpretation of why a player would (not) unilaterally deviate from the equilibrium. This is where Example \ref{exa:vincents-example} demonstrates that a utility maximizer, whose decision is based on more complex objects than numbers, can have an incentive to deviate from an equilibrium if there is more than one goal to optimize. A meaningful notion of equilibrium in this setting is proposed using the concept of a lexicographic Nash equilibrium (Definition \ref{def:lex-order-nasheq}). It turns out that this notion (i) coincides with a conventional Nash equilibrium if the game is a ``standard'' one, i.e., has its payoffs all in $\R$, or equivalently, the distributions are all degenerate. However, it is not a usual Nash equilibrium in $\HR$, as Example \ref{exa:vincents-example} demonstrates, i.e., we introduce a concept that really differs from existing approaches.

From a practical perspective, the concept of a lexicographic Nash equilibrium appears good enough to make decisions as it accounts for explicit goal priorities that are often encountered, and does not require a decision maker to assign numeric weights to reflect importance of goals if the decision making follows a Pareto-optimization technique.
\begin{remark}
In general, a Pareto-optimum is no substitute for a lexicographic optimum
over a set of continuous utility functions $u_1,\ldots,u_d$. To see this,
suppose on the contrary that the lexicographic order $x\lexleq y :\iff
(u_1(x)$, $\ldots,$ $u_d(x))\lexleq (u_1(y),\ldots,u_d(y))$ would be equally
computable as a Pareto-optimum on $\set{x,y}$. This optimization would use a
scalarized function $v = \sum_i w_i u_i$ (with real-valued coefficients
$w_i$) to declare $x\lexleq y$ if and only if $v(x)\leq v(y)$. But since all
$u_i$ are continuous, so would $v$ be as a representation of the
lexicographic order. This, however, contradicts the well known fact that the
lexicographic order does not generally admit a continuous function to
represent it.
\end{remark}	
From the modelling perspective of statistics, the restriction to piecewise polynomial densities comes with a controllable uniform error (Lemma \ref{lem:uniform-density-approximation}), or topologically speaking, the set of such probability densities is dense w.r.t. the $\infnorm{\cdot}$-topology inside the class of all distributions with compact support and continuous density functions (w.r.t. Lebesgue measure).

\section{Matrix Games in the Hyperreal Space}
Consider a normal form game defined by a payoff matrix $\vec A\in\R^{n\times
m}$. We denote the action spaces as $AS_1=\set{1,2,\ldots,n}$ and
$AS_2=\set{1,2,\ldots,m}$, with their corresponding convex hulls (simplexes)
$\simplex(AS_1)=\{(x_1,\ldots,x_n)\in[0,1]^n:$ $\sum_{j=1}^n x_j=1\}$, and
$\simplex(AS_2)$ defined likewise. Nash's theorem implies that there are
elements $(\opt{\vec x},\opt{\vec y})\in\simplex(AS_1)\times\simplex(AS_2)$
such that, when the game is zero-sum, the expected payoff $(\vec x,\vec
y)\mapsto \vec x^T \vec A \vec y$ satisfies
\begin{equation}\label{eqn:saddle-point}
	\vec x^T \vec A \opt{\vec y}\leq \opt{\vec x}^T\vec A\opt{\vec y}\leq(\opt{\vec x})^T\vec A\vec y
\end{equation}
for all $\vec x,\vec y\in \simplex(AS_1)\times\simplex(AS_2)$. By \L os' theorem and the transfer principle \cite{robinson_iterative_1951}, we can (syntactically) rewrite the conditions into all involved variables being hyperreal quantities.

The saddle point condition \eqref{eqn:saddle-point} now reads in hyperreal terms
\[
(\hr{\vec x})^T \hr{\vec A}\opt{\hr{\vec y}}\leq (\opt{\hr{\vec x}})^T\hr{\vec A}\opt{\hr{\vec y}}\leq (\opt{\hr{\vec x}})^T\hr{\vec A}\hr{\vec y}
\]
in which $\hr{\vec A}\in\HR^{n\times m}$ can (among others) now represent moment sequences for probability distributions that describe the payoffs in some real-life game. Since Nash's theorem about the existence of the equilibrium $(\opt{\hr{\vec x}})^T\hr{\vec A}\opt{\hr{\vec y}}$ is a statement in first order logic, it holds analogously inside $\HR$, only at the caveat of the equilibriums strategy being in hyperreal probability terms.

For the decision making practice, reaching a practically useful equilibrium requires a practical interpretation and use for the hyperreal mixed strategies, provided that we can compute them. Since the arithmetic within $\HR$ depends on the ultrafilter $\UF$, we could look for a way of computing an equilibrium that works with only a ``minimum'' amount of arithmetic. Fictitious play is one candidate, which iteratively converges (under various conditions, including the game to be zero-sum such as implied by \eqref{eqn:saddle-point}) to a Nash equilibrium. In simplified terms, the procedure is that both players keep records of their opponents actions, defining empirical mixed strategy estimates $(\vec x_n)_{n\in\N}$ and $(\vec y_n)_{n\in\N}$, and in the next round $n+1$ reply best to either $\vec A\cdot \vec y_n$ for player 1, or reply best to $\vec x_n^T\cdot \vec A$ for player 2. While it is known that this process converges as $n\to\infty$ \cite{robinson_iterative_1951} to an equilibrium, the transfer to the hyperreals translates this convergence into one of $\hr{n}\to\infty$ over a sequence of hyper-integers in ${^*\N}=\N^\infty\slash\UF$. Such sequences are considerably longer than divergent ones within $\N$, since for convergence inside $\HR$, we need to reach beyond integer infinity. Nonetheless, fictitious play carried out by both players looking for best replies by deciding the $\leq$-order in $\HR$ does carry to convergence over integer sequences $n\to\infty$ in $\N$ under the small tweak of accepting two payoffs as identical up to a numeric roundoff error $<\eps$, when $\eps$ is an a priori fixed (machine-)precision (as implemented in older versions \texttt{1.x} of \cite{hyrim_2020}). However, the result is not a Nash equilibrium, as Example \ref{exa:vincents-example} demonstrates.

The observation to take away from this, however, is that a sequence that converges in $\HR$ can admit a subsequence that converges within the subset $\R$ for a sequence of indices within $\N$. This is in some contrast to the usual fact that a convergent sequence has all its subsequences convergent to the same limit. The lesson for practice is thereby to strictly distinguish convergence that we can compute, namely over sequences in $\N$ from theoretical convergence in a superset of the integers, such as $\HR$.

Since Nash equilibria do exist in $\HR$ but are literally out of reach via iteration, it pays to study the object that we can find by online learning and (classical) optimization techniques. We believe that this new notion of equilibrium, coined a \emph{lexicographic equilibrium} in Definition \ref{def:lex-order-nasheq}, may provide new possibilities to address some old and common criticism of conventional Nash equilibria (bounded rationality) in Section \ref{sec:discussion}.

The proposed way to escape the problems discovered by \cite{burgin_remarks_2021} is to restrict our attention to density functions that are piecewise polynomial, and therefore "sufficiently well behaving" for our purposes of decision making and game theory. We develop this idea over the next series of sections, starting with Section \ref{sec:piecewise-polynomial-densities}, pointing out this restriction as rather mild.

\section{Piecewise Polynomial Approximations of Probability Densities}\label{sec:piecewise-polynomial-densities}

\begin{lemma}\label{lem:uniform-density-approximation}
	Let $f:[a,b]\to\R$ be a continous probability density function supported on the compact interval $[a,b]\subset\R$. Then, for every $\eps>0$, there is a piecewise polynomial probability density $g$ that uniformly approximates $f$ as $\infnorm{f-g}<\eps$.
\end{lemma}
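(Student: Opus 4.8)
The plan is to approximate $f$ by a single polynomial via the Weierstrass approximation theorem, and then apply two corrections — enforcing non-negativity and re-normalizing the total mass to $1$ — while keeping the uniform error under control throughout. First I would fix an auxiliary tolerance $\delta>0$ (to be pinned down at the very end as a function of $\eps$, of $b-a$, and of $\infnorm{f}$) and invoke Weierstrass on the compact interval $[a,b]$ to obtain a polynomial $p$ with $\infnorm{f-p}<\delta$. This $p$ need not be a probability density: it may dip slightly below $0$ where $f$ is close to $0$, and its integral over $[a,b]$ need not equal $1$.

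Second, to restore non-negativity I would pass to the truncation $\tilde p := \max(p,0)$, which is continuous and piecewise polynomial, the pieces being the sign-regions of $p$. On the set where $p\geq 0$ one has $\tilde p=p$, so $|f-\tilde p|<\delta$ there; on the set where $p<0$ one has $\tilde p=0$ and $0\le f<\delta$ (since $f\le p+\delta<\delta$), so $|f-\tilde p|<\delta$ again. Hence $\infnorm{f-\tilde p}<\delta$ and $\tilde p\ge 0$. It is exactly this truncation that yields a genuinely piecewise-polynomial density rather than a single polynomial, matching the form we need later.

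Third, I would re-normalize. Writing $I:=\int_a^b \tilde p\,dx$, the estimate $|I-1|=\left|\int_a^b(\tilde p-f)\,dx\right|\le (b-a)\delta$ shows that $I$ is close to $1$, in particular $I>0$ for $\delta$ small, so $g:=\tilde p/I$ is a bona fide probability density that is still piecewise polynomial and non-negative. The total error then splits by the triangle inequality as $\infnorm{f-g}\le \infnorm{f-\tilde p}+|1-1/I|\cdot\infnorm{\tilde p}$, where $\infnorm{\tilde p}\le\infnorm{f}+\delta$ and $|1-1/I|=|I-1|/I\le (b-a)\delta/(1-(b-a)\delta)$. Both summands are $O(\delta)$, so choosing $\delta$ small enough forces $\infnorm{f-g}<\eps$, as required.

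The Weierstrass invocation is the routine part; the only real obstacle is juggling the two density constraints at once — ensuring the non-negativity repair does not spoil the uniform bound, and ensuring the normalizing constant $I$ stays bounded away from $0$ so that the final rescaling inflates the error by only a controlled $O(\delta)$ amount rather than an uncontrolled factor.
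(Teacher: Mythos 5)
Your proposal is correct and follows essentially the same route as the paper's proof: Weierstrass approximation, truncation at zero to restore non-negativity (yielding the piecewise-polynomial structure), and renormalization, with the uniform error controlled by letting the auxiliary tolerance $\delta\to 0$. If anything, your two-term decomposition $f-g=(f-\tilde p)+\left(1-\tfrac{1}{I}\right)\tilde p$ with the direct bound $\left|1-\tfrac{1}{I}\right|\le (b-a)\delta/(1-(b-a)\delta)$ is tidier than the paper's three-term split, which handles the analogous term $\infnorm{p-\alpha p}$ via a needlessly involved discussion of where $p$ attains its maximum.
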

\begin{proof}
	Pick any $\delta>0$. It is straightforward to use Weierstra\ss' approximation theorem to get a polynomial $p$ that uniformly $\delta$-approximates $f$ on the given interval. The issue is that (i) $p$ may take on negative values, and (ii), $p$ is not necessarily normalized to be a probability distribution. To fix both, we define the sought function $g(x):=\alpha\cdot p^+(x)$ with $p^+(x)=\max(0,f(x))$ for $x\in[a,b]$, with a normalization constant $\alpha>0$ chosen to make $\int_a^b g(t)dt=1$. Let us postpone the role of $\alpha$ until a little later, and look at how well the function $p^+$ approximates $f$.
	
	By definition, $p^+$ is different from $p$ only at positions $x\in[a,b]$ when $p(x)<0$, and otherwise identical. Since $f$ is bounded from below by zero, $p^+$ can only get ``closer'' to the graph of $f$, and hence also satisfies $\infnorm{f-p^+}<\delta$. We will make use of this later. Now, let us normalize $p^+$ into a probability density, i.e., choose $\alpha>0$ such that $\int_a^b \alpha\cdot p^+(t)dt=1$, and look at the maximal error $\infnorm{f-\alpha\cdot p^+}$ on the interval $[a,b]$. We apply the triangle inequality twice after expanding the inner difference,
	\begin{align*}
		\infnorm{f-\alpha\cdot p^+} &= \infnorm{f-\alpha\cdot p^+-p+p+\alpha\cdot p-\alpha\cdot p} \\
		&= \infnorm{(f-p) + (\alpha p-\alpha p^+) + (p-\alpha p)}\\
		&\leq \infnorm{f-p} + \abs{\alpha}\cdot\infnorm{p-p^+} + \infnorm{p-\alpha p}.
	\end{align*}
	Therein, the first term is $<\delta$ by construction (Weierstra\ss' theorem). The second term is the maximum difference between $p$ and $p^+$, which is also bounded by $\delta$ since $p$ cannot fall below $-\delta$ as $f$ is bounded to be $\geq 0$, and $p$ is ``$\delta$-bound'' to $f$.
	
	The third term $p-\alpha p$ attains a maximum where its first order derivative $p'-\alpha\cdot p'=(1-\alpha)p'$ vanishes. Now, if $\alpha=1$, then $p^+$ is normalized already and we are done. Otherwise, $\alpha\neq 1$, and we can just look for a maximum of $p$. Again, since $p$ is bound to a deviation from $f$ that is everywhere smaller than  $\delta$, its maximum must be in an $\delta$-neighbourhood of the maximum of $f$, and we can approximately locate the extreme value $p_{\max}\in [\infnorm{f}-\delta,\infnorm{f}+\delta]$. Wherever this maximum is attained, the same position $x=x_{\max}$ will also maximize the deviation $\alpha\cdot p(x)$. Thus, the maximum possible deviation between $p_{\max}$ and $\alpha\cdot p_{\max}$ is $\leq \abs{\alpha(\infnorm{f}+\delta)-(\infnorm{f}-\delta)}\leq \abs{\alpha\cdot\infnorm{f}-\infnorm{f}}+\abs{\alpha\delta-\delta}=\abs{\alpha-1}\cdot(\infnorm{f}+\delta)$.
	
	Combining all three bounds, we find
	\begin{equation}\label{eqn:uniform-bound}
		\infnorm{f-\alpha\cdot p^+}\leq\delta+\abs{\alpha}\cdot\delta+\abs{\alpha-1}\cdot(\infnorm{f}+\delta)
	\end{equation}
	
	To exhibit this bound to become arbitrarily small ultimately, let us finally estimate the value $\alpha$. To this end, let us return to our previous observation that $f-\delta\leq p^+\leq f+\delta$ due to the uniform approximation property. Integrating the inequalities from $a$ to $b$, we find
	\[
	1-\delta(b-a)\leq \int_a^b p^+(t)dt=\frac 1{\alpha}\leq 1+\delta(b-a)
	\]
	Taking the reciprocal gives us
	\[
	\frac 1{1+\delta(b-a)}\leq \alpha\leq \frac 1{1-\delta(b-a)}
	\]
	Letting $\delta\to 0$, will make $\alpha\to 1$ (sandwich theorem), and thereby also lets the bound \eqref{eqn:uniform-bound} become $<\eps$ ultimately (for any $\eps>0$ that we can choose in advance), thus proving the claim.
\end{proof}

From Lemma \ref{lem:uniform-density-approximation}, we can state that without loss of too much generality, we may approximate any loss distribution of arbitrary shape, yet compactly supported within $[1,\infty)$ by a replacement distribution that is piecewise polynomial. More concisely said, the set of piecewise polynomial densities is even dense inside the entire set of probability distributions that are compactly supported and with a continuous density function (i.e., absolutely continuous w.r.t. the Lebesgue measure).

\section{Approximating Losses Piecewise Polynomially}
Without the uniform error bound, a piecewise polynomial approximation is simple,e.g. by linear interpolating between a chosen set of points within the support $[a,b]$, or by using polynomial splines. For distributions that do not show ``wild oscillating behaviour'', such a simpler approximation could be handy as well.

\begin{remark}[Working with Approximations in Practice] \label{rem:practical-approximations}
	While one may argue that any such approximation (including an $\eps$-uni\-form one) may invalidate the probability model derived to describe the loss, the relevance for practice of security risk management appears only mildly affected, if not unaffected at all. After all, risk is not a physical quantity to enjoy known background dynamics that would lend themselves to the derivation of an ``exact'' probabilistic model, and is in most practical instances a matter of subjective modeling out of experience and domain expertise, and actuarial science \cite{klugman_loss_1998}. In this view, and given that loss data is hard to reliably estimate or predict generally, the approximation comes essentially to the replacement of one approximation of reality by yet another approximation, and possibly so with only an arbitrarily small additional error bounded by $\eps$.
\end{remark}

The approximation actually has considerable technical advantages, since it ensures the totality of the tail order by results of \cite{burgin_remarks_2021}\footnote{Intuitively, the examples constructed by \cite{burgin_remarks_2021} no longer apply, since the two functions $f$ and $g$ can only have finitely many oscillations (since both have a finite degree) around each other.
}, since any two densities $f,g$ that are both piecewise polynomial will eventually dominate one another in a right neighborhood $(b-\delta,b)$ within the support $[a,b]$. More importantly, this also makes the tail order based on moment sequences (see Definition \ref{def:hyrim-order}) ``natural'' as being independent of the ultrafilter underneath the hyperreal space.

\begin{lemma}\label{lem:ordering-is-natural}
	Let $[a,b]\subset[1,\infty)$ being given, and let $\F$ be the set of continuous probability densities supported on $[a,b]$ that are piecewise polynomial. This set is totally ordered under $\preceq$ induced by the natural ordering on the hyperreal space, 
	and the ordering is independent of the ultrafilter $\UF$ therein.
\end{lemma}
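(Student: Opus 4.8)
The plan is to reduce the ordering question to the \emph{eventual sign} of the moment differences and to show that this sign is governed by the behaviour of $f-g$ near the right end of the support. Fix $f,g\in\F$ with moment sequences $\hat{x}=(m_X(n))_{n}$ and $\hat{y}=(m_Y(n))_{n}$, and set
$I_n:=m_X(n)-m_Y(n)=\int_a^b x^n h(x)\,dx$ where $h:=f-g$. Since $f,g$ are piecewise polynomial, so is $h$, with only finitely many pieces, each of finite degree. If $h\equiv 0$ then $f=g$ and there is nothing to compare. Otherwise let $b^*\in(a,b]$ be the rightmost point at which $h$ is ``active'' (so $h\equiv 0$ on $(b^*,b]$), and on some interval $(b^*-\delta,b^*)$ the function $h$ coincides with a single nonzero polynomial $q$. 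Let $r\ge 0$ be the order of vanishing of $q$ at $b^*$ and $c_r\neq 0$ the corresponding leading coefficient, so that $h(x)=c_r(b^*-x)^r+o((b^*-x)^r)$ as $x\to(b^*)^{-}$.

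The core of the argument is a Laplace-type estimate showing that $\operatorname{sgn}(I_n)$ is eventually constant and equal to $\operatorname{sgn}(c_r)$. Because the support lies in $[1,\infty)$, the weight $x^n$ is increasing and, as $n\to\infty$, concentrates near the largest active point $b^*$. I would split $I_n=\int_a^{b^*-\delta}+\int_{b^*-\delta}^{b^*}$ and substitute $x=b^*-t$ in the inner integral, which then behaves like $c_r\int_0^{\delta}(b^*-t)^n t^r\,dt\sim c_r\,r!\,(b^*)^{\,n+r+1}/n^{\,r+1}$ (using $(1-t/b^*)^n\approx e^{-nt/b^*}$), while the outer integral is $O((b^*-\delta)^n)$ and hence exponentially negligible against $(b^*)^n/n^{\,r+1}$. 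This yields $I_n\big/\big((b^*)^{\,n+r+1}/n^{\,r+1}\big)\to c_r\,r!\neq 0$, so there is an $N$ with $\operatorname{sgn}(I_n)=\operatorname{sgn}(c_r)$ for all $n\ge N$. The hard part will be making this estimate fully rigorous — in particular the domination of the endpoint contribution over the rest and the justification of the asymptotic together with its error term; everything downstream is bookkeeping. This is exactly where piecewise polynomiality is used: finitely many pieces of finite degree force $h$ to have a single definite sign in a left neighbourhood of $b^*$, ruling out the oscillations of \cite{burgin_remarks_2021}.

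From the eventual-sign statement the lemma follows quickly. Assuming, as is required for $\HR$ to be a proper extension, that $\UF$ is free, every cofinite subset of $\N$ lies in $\UF$ and no finite subset does. If $c_r>0$ then $\{n:m_X(n)\le m_Y(n)\}\subseteq\{0,\dots,N-1\}$ is finite, hence excluded from every free $\UF$, so $\hat{x}>\hat{y}$, i.e.\ $g\prec f$, independently of $\UF$; symmetrically $c_r<0$ forces $f\prec g$ for every $\UF$. This simultaneously gives independence of the ultrafilter and comparability of any two elements, so $\preceq$ is total. Reflexivity and transitivity are inherited from the total order $\leq$ on $\HR$. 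For antisymmetry, $f\preceq g$ and $g\preceq f$ give $\hat{x}=\hat{y}$, i.e.\ $\{n:I_n=0\}\in\UF$; but by the key step $I_n\neq 0$ for all $n\ge N$ whenever $h\not\equiv 0$, so this set is finite and thus not in the free $\UF$, forcing $h\equiv 0$ and $f=g$. Hence $(\F,\preceq)$ is a total order that does not depend on the choice of $\UF$.
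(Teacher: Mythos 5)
Your proof is correct, and its overall skeleton coincides with the paper's: reduce to the sign of $h=f-g$ near the right end of the support (finitely many polynomial pieces of finite degree preclude oscillation), deduce that the moment difference $I_n$ has eventually constant sign, and conclude that the set of indices on which the inequality holds is cofinite, hence in the ultrafilter. The genuine difference is in the key step: the paper does not prove the eventual-dominance statement itself but imports it wholesale from \cite[Lemma 8 and Proposition 9]{burgin_remarks_2021}, whereas you derive it self-containedly by a Laplace-type estimate. Your estimate is sound, and the part you flag as ``the hard part'' is in fact routine: with $h(x)=c_r(b^*-x)^r+O\bigl((b^*-x)^{r+1}\bigr)$ near the rightmost active point $b^*$, the substitution $t=b^*s$ turns the inner integral into an incomplete Beta integral $(b^*)^{n+r+1}\int_0^{\delta/b^*}(1-s)^n s^r\,ds\sim r!\,n^{-(r+1)}$ (rescale $s=u/n$ and dominate via $(1-u/n)^n\le e^{-u}$), the polynomial error term is smaller by a factor $n^{-1}$, and the outer integral is $O\bigl((b^*-\delta)^n\bigr)$, exponentially negligible against $(b^*)^n n^{-(r+1)}$; hence $\operatorname{sgn}(I_n)=\operatorname{sgn}(c_r)$ eventually, with an explicit rate that the paper's citation does not provide. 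Beyond self-containedness, your write-up is tighter in three respects: defining $b^*$ as the rightmost point where $h$ is active cleanly covers the case $f\equiv g$ on trailing subintervals, which the paper's argument on ``the last interval'' of the refined partition passes over silently; you make explicit that $\UF$ must be free for cofinite sets to lie in it (the paper's ``by definition'' is only valid for free ultrafilters, since a principal one contains sets missing any given cofinite set's complement structure); and you verify antisymmetry of $\preceq$ on $\F$ (equality of the hyperreal representatives forces $h\equiv 0$ because $I_n\neq 0$ for all large $n$), a point the lemma's claim of a total order needs but the paper leaves implicit. What the paper's route buys in exchange is brevity and consistency with the companion paper it is correcting.
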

\begin{proof}
	Let two densities $f,g$ be given, defined by individual polynomials on partitions of $[a,b]$ given by $a=t_1<t_2<\ldots<t_n=b$ for the density $f$, and $a<s_1<s_2<\ldots<s_m=b$ for the density $g$. Then, the difference $f-g$ is again piecewise polynomial on a finer partition $a=r_1<r_2<\ldots<r_\ell=b$, where we can choose the points $r_i$ such that in each open subinterval $I_j=(r_j,r_{j+1})$, we have the trichotomy of either $f|_{I_j}=g|_{I_j}$, or $f|_{I_j}<g|_{I_j}$ or $f|_{I_j}>g|_{I_j}$, where the "or" is exclusive and the functions are understood as restricted to the subinterval. Note that this partition is necessarily finite, since all polynomials have a finite degree (and thus cannot oscillate infinitely often). It then follows by \cite[Lemma 8 and Proposition 9]{burgin_remarks_2021} (generalizing and extending prior incomplete arguments about this, given in \cite{rass_decisions_2016}), that the dominance relation between $f$ and $g$ on the last interval $[r_{\ell-1},r_{\ell}]$ determines which moment sequence (that of $f$ or that of $g$) diverges faster. Specifically, if we write $(m_f(n))=(E_f(X^n))_{n\in\N}$ for the moment sequence of $f$, let $(m_g(n))$ be the likewise defined moment-sequence for the density $g$, then the ultimate dominance of $g$ over $f$ (assumed here without loss of generality, for otherwise, we may just switch names between $f$ and $g$), that (see \cite[Proposition 9]{burgin_remarks_2021}) there is an index $N\in\N$ such that the moments satisfy $m_f(k)<m_g(k)$ for all $k\geq N$. This puts the two sequences into a $\leq$-relation within the hyperreals, and this order is in a way ``natural'' as it is independent of the ultrafilter $\UF$: to see this, note that the order is only violated on a finite subset of indices in $\set{1,\ldots,N-1}$, whose complement must, by definition, be a member of $\UF$.
\end{proof}

From the assumption of piecewise polynomial densities (supported by Lemma \ref{lem:uniform-density-approximation}, and the canonicity of the ordering as follows from Lemma \ref{lem:ordering-is-natural}, we get a constructive  criterion to decide the $\preceq$-relation:
\begin{proposition}\label{prop:derivative-criterion}
	Let $\F$ be as in Lemma \ref{lem:ordering-is-natural}. Then, for every $f$ we can calculate a finite-dimensional vector $\vec v_f\in\R^n$, where $n$ depends on $f$, with the following property: given two density functions $f,g$ with computed vectors $\vec v_f=(v_{f,1},\ldots,v_{f,n})$ and $\vec v_g=(v_{g,1},\ldots,v_{g,m})$, we have $f\prec g$ if and only if $\vec v_f\lexlt \vec v_g$, taking absent coordinates to be zero when $n\neq m$. If the two vectors are lexicographically equal, then $f=g$.
\end{proposition}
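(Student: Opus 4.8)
The plan is to extract from each piecewise polynomial density $f$ a finite vector $\vec v_f$ that encodes the local behaviour of $f$ at the right endpoint $b$ of the support, because Lemma \ref{lem:ordering-is-natural} tells us that the $\preceq$-order is decided entirely by which density dominates on the last subinterval $(r_{\ell-1},b)$. Concretely, if $q$ denotes the polynomial piece of $f$ on the rightmost subinterval $(t_{n-1},b)$, I would set $\vec v_f$ to be the vector of Taylor coefficients of $q$ expanded around $b$, ordered so that the \emph{leading} nonvanishing behaviour as $x\to b^-$ appears in the \emph{first} coordinate. That is, writing $q(b-u)=c_0+c_1 u+c_2 u^2+\cdots$ in powers of $u=b-x\geq 0$, I put $\vec v_f=(c_0,c_1,c_2,\ldots)$ up to the degree of $q$. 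The key point is that for $x$ in a sufficiently small left-neighbourhood of $b$, the sign of $f(x)-g(x)$ is governed by the first index at which the coefficient vectors of $f$ and $g$ differ, which is precisely a lexicographic comparison.

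First I would make the extraction well-defined and $f$-dependent in dimension: $n$ is $1$ plus the degree of the rightmost polynomial piece of $f$, so $\vec v_f\in\R^n$ is genuinely finite and computable from the piecewise description. Next I would prove the equivalence. Given $f,g$, form the difference $h=f-g$, whose rightmost polynomial piece is $q_f-q_g$; expanding in $u=b-x$, its coefficient vector is exactly $\vec v_f-\vec v_g$ (padding the shorter one with zeros, which is exactly the ``absent coordinates are zero'' convention in the statement). If $\vec v_f=\vec v_g$ coordinatewise then $q_f\equiv q_g$ on the last interval; one then argues backwards interval by interval — but more simply, by Lemma \ref{lem:ordering-is-natural} identical tails force all moments to eventually agree, and two continuous piecewise polynomial densities with coinciding tails and equal moment sequences must be identical, giving $f=g$. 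If instead $\vec v_f\neq\vec v_g$, let $j$ be the first index where they differ. Then $h(b-u)=(v_{f,j}-v_{g,j})u^{\,j-1}+O(u^{\,j})$ near $u=0$, so the sign of $h$ on $(b-\delta,b)$ for small $\delta$ equals the sign of $v_{f,j}-v_{g,j}$. Thus $f$ is dominated by $g$ on the last interval iff $v_{f,j}<v_{g,j}$, which is exactly $\vec v_f\lexlt\vec v_g$; combining with the tail-dominance conclusion of Lemma \ref{lem:ordering-is-natural} yields $f\prec g\iff\vec v_f\lexlt\vec v_g$.

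The main obstacle I anticipate is the degenerate case where the two rightmost pieces agree identically but the densities still differ further left, i.e.\ when the leading coefficient vectors coincide up to the shorter length yet $f\neq g$. Here the single rightmost interval does not decide the order, and one must argue that the comparison ``rolls back'' to the next subinterval where $f$ and $g$ separate; this requires either choosing $\vec v_f$ to concatenate the local expansions over \emph{all} pieces from $b$ leftwards (making $n$ the total coefficient count), or invoking that equal tails plus the moment argument of Lemma \ref{lem:ordering-is-natural} reduce the comparison to the truncated densities on $[a,r_{\ell-1}]$ by induction on the number of pieces. I would handle this by defining $\vec v_f$ as the concatenation of the Taylor coefficients of the successive pieces read from right to left, so that a tie on the rightmost block automatically passes the decision to the next block in the lexicographic comparison, mirroring exactly how $\preceq$ cascades through successive tail intervals.

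A secondary subtlety worth checking is consistency of the padding convention: when $\deg q_f\neq\deg q_g$, appending zeros must correspond to the true expansion coefficients (a polynomial of lower degree genuinely has zero higher-order coefficients), so the lexicographic comparison with zero-padding faithfully reproduces the sign of $f-g$ near $b$; this is routine but must be stated to justify the ``absent coordinates are zero'' clause in the proposition.
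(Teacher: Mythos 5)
Your construction is, up to an inessential normalization, the same as the paper's: the Taylor coefficients of $q(b-u)$ in powers of $u=b-x$ are $c_k=(-1)^k q^{(k)}(b)/k!$, so your vector agrees coordinatewise with the paper's vector of alternated-sign derivatives $\bigl((-1)^k f^{(k)}(b)\bigr)_k$ up to the positive factors $1/k!$, which do not affect a lexicographic comparison. Your justification via the sign of the leading nonvanishing term of $h=f-g$ near $b^-$ is in fact a little more direct than the paper's induction (which integrates $f^{(k)}<g^{(k)}$ down to order zero), and your cascade over successive subintervals with zero-padding mirrors the paper's ``append'' construction exactly.

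There is, however, one genuine gap in how you cascade: you concatenate coefficient blocks along each function's \emph{own} partition, but the blockwise lexicographic comparison is only meaningful if both vectors are built over a \emph{common} refinement of the two partitions, which is what the paper does by reusing the points $a=r_1<r_2<\dots<r_\ell=b$ from the proof of Lemma \ref{lem:ordering-is-natural}. Concretely, suppose $f$'s rightmost piece lives on $[0.5,b]$ and $g$'s on $[0.7,b]$, and the two polynomials coincide; the order is then decided on $(0.5,0.7)$ by comparing $f$'s \emph{current} polynomial against $g$'s \emph{next} piece, both expanded at $0.7$ --- whereas your recipe would compare $f$'s next block expanded at $0.5$ with $g$'s next block expanded at $0.7$, i.e., expansions around different points, and the two vectors need not even have aligned blocks. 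The fix is routine (re-expand every polynomial piece at the joint breakpoints before concatenating), but it must be stated; note it also means $\vec v_f$ implicitly depends on the pair $(f,g)$ through the joint partition, a tension already present in the ``for every $f$'' phrasing of the proposition itself. Separately, your aside that identical rightmost pieces ``force all moments to eventually agree'' is false --- if $f=g$ on $[c,b]$ but $f\neq g$ to the left, the tail-dominance result you cite yields eventually \emph{strictly} ordered moments, determined by the last interval of disagreement --- but this detour becomes unnecessary once the full concatenated vector over the joint partition is used, since coordinatewise equality then forces $f=g$ piece by piece.
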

\begin{proof}
	The first part of this result is literally taken from \cite{rass_game-theoretic_2015} and repeated here only for convenience of the reader. Let us take the partitioning $a=r_1<r_2\ldots r_{\ell}=b$ from the proof of Lemma \ref{lem:ordering-is-natural}, and look for which density is below the other in the last interval $I_{\ell}=[r_{\ell-1},b]$ only. To ease our notation, let us for the moment restrict $f$ and $g$ only to the interval $[r_{\ell-1},b]$, and let $f,g$ synonymously mean $f|_{I_\ell}$ and $g|_{I_\ell}$.
	
	Let us take a ``mirrored'' view on the functions around the vertical line at
	$x=b$ and shift the functions to the left by a substitution $x\gets x-b$, so that the interval of interest is now $[0,b-r_{\ell-1}]$ for the replacement functions $f(x)\gets f(-(x-b))$ and $g(x)\gets g(-(x-b))$. Clearly, whichever function grows slower in a neighborhood $[0,\eps)$ with $\eps>0$ is the $\preceq$-lower function. Deciding this is easy by looking at $k$-th order derivatives at $x=0$: we will inductively show that if
	\begin{equation}\label{eqn:derivative-criterion}
		((-1)^k\cdot f^{(k)}(0))_{k\in\N}<_{lex} ((-1)^k\cdot g^{(k)}(0))_{k\in\N},
	\end{equation}
	then $f\preceq g$.
	
	For $k=0$, if $f(0)<g(0)$, then $f\preceq g$, since the continuity implies that the relation holds in an entire neighborhood $[0,\eps)$ for some $\eps>0$. This completes the induction start.
	
	For the induction step, assume that $f^{(i)}(0)=g^{(i)}(0)$ for all $i<k$,
	$f^{(k)}(0)<g^{(k)}(0)$, and that there is some $\eps>0$ so that
	$f^{(k)}(x)<g^{(k)}(x)$ is satisfied for all $0\leq x<\eps$. Take any such
	$x$ and observe that
	\begin{align*}
		0 &> \int_0^x\left(f^{(k)}(t)-g^{(k)}(t)\right)dt =f^{(k-1)}(x)-f^{(k-1)}(0)-\left[g^{(k-1)}(x)-g^{(k-1)}(0)\right]\\
		&=f^{(k-1)}(x)-g^{(k-1)}(x),
	\end{align*}
	since $f^{(k-1)}(0)=g^{(k-1)}(0)$ by the induction hypothesis. Thus,
	$f^{(k-1)}(x)<g^{(k-1)}(x)$, and we can repeat the argument until $k=0$ to
	conclude that $f(x)<g(x)$ for all $x\in[0,\eps)$.
	
	For returning to the original problem, we must only revert our so-far
	mirrored view by considering $f(-x), g(-x)$ in the above argument. The
	derivatives accordingly change into $\frac {d^k}{dx^k}f(-x)=(-1)^k
	f^{(k)}(x)$, to arrive at criterion \eqref{eqn:derivative-criterion}.
	
	Now, observe that the functions $f,g$ were all piecewise polynomial, and especially are so on the original interval $[r_{\ell-1}, r_{\ell}]$. Since the two have finite degree, both are in $C^\infty$, with the derivative sequences eventually becoming and remaining zero after $1+\deg(f)$, resp. $1+\deg(g)$ derivations.
	
	The vectors $\vec v_f, \vec v_g$ are just defined to operationalize condition \eqref{eqn:derivative-criterion} by collecting the alternated-sign derivatives up to order $1+\max(\deg(f),\deg(g))$ as in \eqref{eqn:derivative-criterion}. The polynomial of lower degree will naturally have less nonzero derivatives, but by just carrying on the derivation on the zeroes, will thus only become extended with zeroes until it has the same dimension as the vector for the polynomial with the larger degree.
	
	If all derivatives of orders $0,1,2,\ldots,1+\max\set{\deg(f),\deg(g)}$ coincide, equivalently, if the vectors $\vec v_f$ and $\vec v_g$ are identical, then the polynomials are themselves identical (note that the ``zero-th'' derivative is explicitly needed within the vectors).
	
	If the two densities coincide on the subinterval $[r_{\ell-1},r_{\ell}]$, then the order is determined by which function dominates the other on the next subinterval $[r_{\ell-2},r_{\ell-1}]$. Analogously, we get another sequence of derivatives for the polynomials defining the density on this subinterval, and we can just ``append'' them to the so-far constructed vectors $\vec v_f,\vec v_g$. Since the degree of the polynomial on this subinterval is again finite, the vectors also remain finite.
	
	We can then repeat this procedure up to the last interval $[a=r_1,r_2]$, to either decide the order there lexicographically on the so-far constructed vectors $\vec v_f,\vec v_g$, or conclude that the densities are identical.
\end{proof}

\begin{remark}
	We emphasize that along all these lines, we strongly rely on the finiteness of the support, partitions and also degrees of polynomials. None of the above results may hold after dropping any of these finiteness assumptions, as counterexamples in \cite{burgin_remarks_2021} demonstrate.
\end{remark}

Picking up on remark \ref{rem:practical-approximations}, it is also important to bear in mind that the uniform approximation of Lemma \ref{lem:uniform-density-approximation} does not naturally extend to being also an approximation of derivatives. Practically, one may compute a uniform polynomially approximation in several ways, such as Remez' algorithm or using Bernstein polynomials. M-splines \cite{ramsay_monotone_1988} offer an appealing alternative in having the properties of probability densities by construction, at the cost of no longer necessarily providing a uniformly good approximation. A possibility to simultaneously $\infnorm{\cdot}$-approximate all derivatives up to a fixed order is offered by the direct method that Weierstra\ss \ used to prove the approximation theorem, namely by convolution with a truncated polynomial (of proper choice to approximate a Gaussian kernel), or also by using splines with a Bernstein polynomial base. Practically, one may consider taking ``approximate equalities'' of quantities within a deviation of $<\eps$, i.e., the chosen approximation accuracy. Imposing this rounding on all derivatives, if appropriate for the application, may settle the issue with the approximation in the easiest way.

\subsection{Experimental Evaluation}\label{sec:experiments}
It always pays to run some numeric experiments to verify theoretical claims and assess the practical usefulness of results. In our case of the uniform piecewise polynomial approximation (Lemma \ref{lem:uniform-density-approximation}), we took some artificial data to compile a kernel density distribution estimate, and applied the techniques of Lemma \ref{lem:uniform-density-approximation} to it by seeking a Bernstein polynomial approximation to the kernel density (via the \texttt{splines2} package for R \cite{splines2-package}), truncating it at regions below zero, and renormalizing to unit integral in each trial. Towards reaching the desired accuracy, we set $\eps=0.1$ as a (crude) accuracy bar, and ran an exponential search by doubling the order $d$ of the Bernstein polynomial until the desired accuracy was undercut, leaving the ``optimal'' order somewhere in the interval $[d,2d]$. Within this remaining search space, we ran a binary search (letting the uniform approximation error again increase) until we first exceed the threshold $\eps=0.1$. Overall, the best approximation was found at order 2456 of the Bernstein polynomial, with a uniform error of $\leq 0.09998103$. This, together with the crude approximation target of 0.1, shows that the convergence \emph{can}, in general, be rather slow. The implementation was done in R \cite{R-core}, version 4.1.0.

\subsection{(Un)ambiguity of the Order}
We emphasize that the proximum given by Lemma \ref{lem:uniform-density-approximation} is generally non-unique, which can induce ambiguities in the ordering. Thus, and the construction does not lend itself to a ``canonic’’ ordering of \emph{all} distributions that would independent of the ultrafilter $\UF$. If the ordering ought to be independent of $\UF$ by restricting it to $\F$, it will depend on which particular candidate members of $\F$ approximate the distributions in question. Otherwise, resorting to the ordering of hyperreals without restricting the set of density functions (other than having a hyperreal representative), the order will depend on the particular $\UF$. One way to escape the issue is via discretization: we can finitely partition the interval $[a,b]$ and assign the respective mass that two distributions $F_X,F_Y$ put on the subintervals as probability values, to get categorical distributions $\hat F_X,\hat F_Y$ that naturally order under $\preceq$ and do so independently of $\UF$ (for example, histograms constructed from a finite lot of empirical data will naturally deliver this). This is in fact consistent with the contemporary recommendations of quantitative risk management \cite{munch_wege_2012}, as it can avoid numerical accuracy issues (of several kinds, not only robustness). In addition, it has the appeal of allowing for an account of subjective risk appetite, meaning that people who are risk averse or risk seekers, can define the partitioning of the interval accordingly to the ranges that most strongly govern their decision making under risk. This idea has previously been formalized by \cite{alshawish_risk-based_2020}, using the convexity or concavity of subjective utility functions to define the partitioning via quantiles.

\section{Game Theory over Tail Orders}
\begin{definition}[the set $\F$]\label{def:set-f-ab}
	Let $1\leq a<b<\infty$ be two real numbers, from which we define the set $\F$ to contain all probability distributions that (i) are absolutely continuous w.r.t. the Lebesgue measure, and (ii) have a density that is piecewise polynomial over a finite partition of the compact interval $[a,b]$.
\end{definition}

With the so-restricted set $\F$, let us set up a matrix-game with payoffs only being piecewise polynomial densities from $\F$, then these are totally ordered, and their hyperreal representatives re-create the game as a humble matrix game entirely within the hyperreal space. Rigorously, let the game matrix $\vec A\in\F^{n\times m}$ be defined over the finite strategy spaces $AS_1=\set{1,\ldots,n}, AS_2=\set{1,\ldots,m}$, and consider the corresponding mixed strategies $\vec x,\vec y\in\simplex(AS_1)\times\simplex(AS_2)\subset\HR^2$ explicitly as categorical distributions with hyperreal probability masses. Thus, letting the whole game be played within the hyperreals, we have all necessary ingredients (continuity of the expected payoff $\vec x\cdot\vec A\cdot\vec y$ w.r.t. the order topology, and Glicksberg's theorem to assure the existence of equilibria) ready to get a whole theory of games within the hyperreals ``for free''.

The practical caveat comes in if we speak about playing games entirely within $\HR$, in which case the mixed strategies themselves become vectors of hyperreal numbers. These ``are'' in a way probabilities, but are far less trivial to interpret in a frequentistic way. Whether there is an alternative interpretation in a more subjective manner like in Bayesian statistics, is a question left open here.

Generally, without being restricted on $\F$, the $=$-relation over the hyperreals induces only a partitioning of all distributions into equivalence classes, but the equality of the hyperreal representatives does not imply an identity of the corresponding distributions. To this end, we require suitable restrictions of the set of distributions, such as to $\F$.

While sounding technically inconvenient, it may offer an interesting explanation for bounded rationality effects, whose investigation is -- in our view -- a matter outside purely mathematical considerations and thus left as a pointer of possible research. Stated more concisely:

\begin{quote}
	\emph{If the utility-maximizing paradigm is violated in practical situations for real-valued utility functions, can a seeming deviation from an equilibrium be nonetheless rational if the utility maximization is just done in a different structure than $\R$?}
\end{quote}

Our work is exactly an positive instance of the above question:
The idea of replacing real values by distributions for the sake of a ``more informed'' decision making exhibits this effect, since if unrestricted, the ordering of distributions represented by moment sequences making up hyperreal numbers would depend on the specific ultrafilter. The above question is then linked to whether effects of bounded rationality can be explained as rational under the utility maximization paradigm, only using a properly chosen ultrafilter. We leave this question unanswered here, since it appears to run deeper than the scope of this work.

This effect can be demonstrated even without resorting to any hyperreal arithmetic or games, if one seeks to leverage Proposition \ref{prop:derivative-criterion} for the purpose of playing games over lexicographic order, with the hyperreal machinery as a mathematical backup. Suppose we would play a matrix game with distributions as payoffs, i.e., with a matrix $\vec A\in\F^{n\times m}$, and that we run fictitious play as an online learning algorithm for both players to converge towards a Nash equilibrium alternatingly responding $\preceq$-optimal to the other player, using Proposition \ref{prop:derivative-criterion}. It is well known that if fictitious play converges, then the limit is an equilibrium. However, the next example shows that the result is not necessarily a Nash equilibrium:

\begin{example}[\cite{burgin_remarks_2021}]\label{exa:vincents-example}
	Consider a $2\times 2$ zero sum game composed from categorical distributions on the common support $\Omega=\set{1,2,3}$, given by the following payoff structure, with the lexicographically maximizing row player having strategies $r_1,r_2$ and the lex-minimizing column player having strategies $c_1,c_2$. The lexicographic order is herein taken from right to left.
	
	\begin{center}
		\vspace{0.5\baselineskip}
		\centering
		\begin{tabular}{|c|c|c|}
			\hline
			$\vec A$ & $c_1$ & $c_2$ \\
			\hline
			$r_1$ & (0.3, 0.2, 0.5) & (0.6, 0.3, 0.1) \\
			\hline
			$r_2$ & (0.8, 0.1, 0.1) & (0.3, 0.2, 0.5) \\
			\hline
		\end{tabular}\vspace{0.5\baselineskip}
	\end{center}
	
	The point observed in \cite{burgin_remarks_2021} is that a Nash equilibrium $(\opt{\vec x},\opt{\vec y})$ in this game necessarily is also an equilibrium in the game $G_3$ composed only from the third coordinates, i.e., $G_3=\binom{0.5~0.1}{0.1~0.5}$. This game has a unique Nash equilibrium (from classical calculations) coming to $\opt{\vec x}=\opt{\vec y}=(0.5, 0.5)$ for  and giving the average payoff $\opt{\vec x}\cdot\vec A\cdot \opt{\vec y}=(0.5, 0.2, 0.3)$. However, player 1 can unilaterally deviate to lexicographically gain more by playing $\vec x'=(1,0)$ to receive $(1,0)^T\cdot\vec  A\cdot \opt{\vec y}=(0.35, 0.25, 0.3)$.  Thus, the strategy $(0.5, 0.5)$ is not optimal for the row-player, and since there is no other equilibrium possible in $G_3$, \cite{burgin_remarks_2021} concluded that there is no Nash equilibrium at all in this game, w.r.t. lexicographic order.
\end{example}

The way to resolve the apparent paradox is to reconsider the notion of an equilibrium from a different angle, and particularly bearing in mind that the players actually engage in more than one game simultaneously, which changes the incentive mechanisms in a crucial way.

\section{When Unilateral Deviations from an Equilibrium can be Rational}

The phenomenon of seemingly rational unilateral deviation can be attributed to an implicit yet flawed subsequent assumption, seemingly ``implied'' by assuming that the other player follows the equilibrium \emph{after} a player has deviated, which may just not happen in reality: recall, just intuitively, that a Nash equilibrium is understood as a strategy profile in which a player, \emph{assuming} that all its opponents follow the equilibrium, has no incentive to deviate from its own equilibrium strategy. Let us, from player 1's perspective, put this assumption to question: The implicit error is made when we assume that the opponents would not react on player 1's deviation, which will not happen in reality whenever the game is repeated. In fact, any strategy that player 1's opponents may constantly play (whether mixed or not) may open a door for player 1 to increase its own revenue, but only as long as the opponents do not likewise respond to similar opportunities for themselves. This effectively initiates an online learning process, yet not necessarily equal to fictitious play (for reasons outlined above and corroborated by Example \ref{exa:vincents-example}).

To see the effect, take a simple diagonal game with payoff structure being the identity matrix
\[
\vec A = \begin{pmatrix}
	1 & 0 & 0 \\
	0 & 1 & 0 \\
	0 & 0 & 1
\end{pmatrix}
\]
It is immediate that this game has a unique Nash equilibrium being $\opt{\vec x}=\opt{\vec y}=(1/3,1/3,1/3)$ for both players, paying $v=1/3$ as the saddle point value.

Now, since this is an equilibrium, player 1 cannot gain anything more in \emph{this} game by playing different to $\opt{\vec x}$. But what if it could gain more in a second game played simultaneously, while knowing that it can safely deviate as long as player 2 sticks to $\opt{\vec y}$? This is entirely legitimate and covered by the assumptions underneath an equilibrium, since it does not speak about players engaging in \emph{several} competitions. But this is the situation that we have, and it can destabilize the equilibrium.

The problem kicks in when player 2 is also engaged in the same ``second'' game, where player 1 seeks to improve its payoff. If so, then player 1's deviation relying on the equilibrium property of one game may create an incentive for the other player to deviate too, simply because there is a second game that the players also adapt to. The problem reported in Example \ref{exa:vincents-example} is merely because player 1's inventive to deviate is not to win more in \emph{this game}, but rather to gain more in \emph{another} game that it plays simultaneously, namely game $G_2$. Clearly, the equilibrium in $G_3$ is not also an equilibrium in $G_2$, so there \emph{is} room for improvement. And this is the humble reason why player 1 can look for an alternative strategy to win more in $G_2$, while the payoff in $G_3$ remains constant, assuming that the opponent sticks with the equilibrium behavior in $G_3$.

This is precisely the point where practical events will unfold into a learning process, since a rational opponent will most likely adapt to the changed situation that player 1 just created. This reaction can trigger a reconsideration of player 1's choice, and so on. Eventually, the two players will enter a ficitious play like process, whose convergence is generally nontrivial.

But this effect is indeed not in contrast to the existence of equilibria at all, and their existence does not rely on whether the mutual learning carries to convergence. The point is to reconsider game $G_2$ with a changed strategy set, according to the optimal behavior in game $G_3$. Indeed, the proper way of finding an equilibrium starts with the computation of one in $G_3$, but after that, we are bound to play only $G_3$-equilibria when entering $G_2$, and not its original pure strategies.

\section{Lexicographic Nash Equilibria}
So, to restore the useful notion of an equilibrium, we propose an extension of the concept to several games. Commonly, this is done by rephrasing optimality in one dimension by Pareto-optimality in several dimensions, but this would be too weak for our purposes, since Proposition \ref{prop:derivative-criterion} induces a strict preference order on the payoff dimensions. This has indeed also practical roots, since several goals in a game, especially in the security context, may come in a clear order of importance. Assigning weights to them for the computation of Pareto-optima is generally less trivial and a more involved task for a practitioner.

\begin{definition}[Lexicographic Nash equilibrium]\label{def:lex-order-nasheq} Let $\vec A_1,\ldots,\vec A_d\in \R^{n\times m}$ be a finite collection of 2-player games, all over the same strategy spaces $AS_0, AS_1$ for all players, and listed in descending order of (lexicographic) importance. We call a strategy profile $(\opt{\vec x_0},\opt{\vec x_1})\in\simplex(AS_0)\times\simplex(AS_1)$ a \emph{lexicographic Nash equilibrium in mixed strategies}, if for any player $i\in\set{0,1}$ upon an unilateral deviation towards ${\vec x}_i'\neq \opt{\vec x}_i$ to improve its revenue in the $k$-th game $(1\leq k\leq d)$, there is an index $k'<k$ and a strategy $\vec x_{1-i}\neq\opt{\vec x}_{1-i}$ such that player $i$'s payoff in game $\vec A_{k'}$ gets worse when the second player also deviates to the joint profile $(\vec x_i',\vec x_{1-i})$.
\end{definition}

Definition \ref{def:lex-order-nasheq} differs only slightly from the usual definition of a Pareto-Nash equilibrium \cite{lozovanu_multiobjective_2005} for multiobjective games, essentially by implying that a deviation from the optimum will indirectly cause losses for the deviating player in regards of a more important payoff dimension than where the improvement was attempted.

Example \ref{exa:vincents-example} in light of this now becomes an illustration of the effect, since player 1, striving to improve in the second coordinate (game $G_2$) thereby incentivizes the opponent to decrease player 1's payoff in the more important game $G_3$ by playing $(0,1)$ over the columns to decrease the reward for player 1 from 0.3 down to 0.1.

For two-player games, the existence of lexicographic Nash equilibria is not difficult to show, and indeed constructive: the software implementation to handle games over tail orders from Definition \ref{def:hyrim-order}, the HyRiM package (as of version \texttt{2.0} \cite{hyrim_2020}), implements lexicographic optimization to this end: given a sequence of game matrices $\vec A_1,\vec A_2,\ldots,\vec A_d$: put $i\gets 1$, and set up a linear program $(LP)$ to compute a saddle point \cite{gibbons_primer_1992}. Then,
\begin{enumerate}
	\item increase $i\gets i+1$, and extend $(LP)$ by the constraint that any strategy played in $\vec A_i$ (the next game in the lexicographic order) to reward the player with at least the saddle point value $v_{i-1}$ for (the previous game) $A_{i-1}$ (or ``at most $\vec v_{i-1}$'' if the player is minimizing). This is to assure that:
	\begin{itemize}
		\item the player can proceed by optimizing the payoff in the next game,
		\item but without worsening its payoff in the previous game.
	\end{itemize}
	\item repeat from step 1, letting $(LP)$ grow one additional constraint in each iteration, until the set of optima has become singleton, or we arrive at $i=d$. The resulting set of optima is then optimal for the overall lexicographic sequence of games.
	
\end{enumerate}
So, is what we get from this procedure a Nash equilibrium? Based on example \ref{exa:vincents-example}, the answer is negative, although it obviously delivers a Nash equilibrium in the case of only a single game being played with one goal (as the computation terminates after the first step in which a conventional Nash equilibrium is computed). Generally, the improvement that player 1 can do destabilizes the situation as it induces an incentive for player 2 to deviate as well, eventually enforcing player 1 to reconsider the deviation. Doing so, it ends up with finding that there is really no incentive to deviate from the equilibrium in $G_3$, since upon a repetition of the game, player 1 would suffer a decrease in the upcoming repetitions.

\section{Discussion}\label{sec:discussion}

The transfer of a game from the real in the hyperreal space is not only a problem of pure theoretical interest, but has its practical application in risk management as it naturally induces a tail ordering, with the rich structure of $\HR$ giving us the fundamental facts about games in $\HR$ almost for free. The price paid for this shortcut is the practical difficulty to translate hyperreal results into real-valued counterparts without sacrificing their properties. We pose this conversion as an open problem for research (also in practical risk management and decision making).

Using hyperreal orderings to define stochastic orders is generally tricky, since to avoid ambiguous orderings, we either need to discretize distributions, model the data using piecewise polynomial distributions in first place already, or choose nonparametric densities with sufficient smoothness to leverage condition \eqref{eqn:derivative-criterion} for an algorithmic decision of $\preceq$. Gaussian kernel density estimates are an example of such an admissible family, as they also avoid oscillations in the tail region, but may become numerically inconvenient when the decision point is remote from the region where most data points are located. Generally, the advice for practical matters of decision making under stochastic orders is thus to either (i) discretize the distribution, making the resulting categories depend on the application context (to define the relevant loss regions), and also (subjective) risk attitudes, or (ii) construct the stochastic models directly from members in $\F$, knowing (from Lemma \ref{lem:uniform-density-approximation}) that these are dense in the larger set of models including the ones appropriate for the application (e.g., from actuarial science or others).

\subsection{Explanations of Bounded Rationality}

Example \ref{exa:vincents-example} demonstrates that a lexicographic Nash equilibrium is not necessarily also a (normal) Nash equilibrium, and hence no generalization thereof. However, it may, again posed as a second question of interdiciplinary study (e.g., involving psychology and cognitive  science in general \cite{starmer_developments_2000}), that bounded rationality effects could also root in the consideration of goals that are not explicitly modelled inside a given game. That is, when a game model is practically found inaccurate since players behave different to what the utility maximizing paradigm would imply, then it could well be for several reasons that we came across during this study. Among them:
\begin{itemize}
	\item the decision being indeed rational, but w.r.t. maximization of utilities that are simply not real-valued, but more complex (e.g., hyperreal in our instance),
	\item the decision is made w.r.t. (perhaps not explicitly known) goal priorities, in which case we may have a lexicographic Nash equilibrium that -- in general -- is not a conventional Nash equilibrium and hence may look like bounded rationality.
\end{itemize}

A notable application of such lexicographic preference decision making was proposed in \cite{alshawish_risk-based_2020}, where the modeling of payoff distributions is used to define a vector of payoffs and sequence of games to reflect the importance of different loss regions for the decision making. Essentially, this concept picks up the idea that gains or losses of certain magnitudes are more important than gains/losses of other magnitudes. For example, a company management may care less about losses of around 10\$, but -- depending on the size of the company -- may care much more about losses in the range of thousands or millions of \$. The elegance of partitioning the range of losses and gains according the importance of different regions is due to this partitioning being possible in account for the individual and subjective risk attitude. For example, a risk averse person may work with a certain (small) range $I_{large}$ of large losses, contrary to a risk seeker, who may be indifferent on a wider interval of large losses (thus taking a higher risk to lose or gain more). The exact definition of these intervals was proposed to come from quantiles computed from the respective convex, concave or linear utility functions to express people's risk attitudes \cite{hillson_understanding_2007}.

\subsection{Connecting Game- and Prospect Theory}
Playing games over tail orders has another possible application to deal with prospects, since those are essentially categorical distributions, and it is long known (and eloquently discussed in  \cite{starmer_developments_2000}) that utility-maximization fails in many practical instances. Lexicographic Nash equilibria are one proposal to study as a possible explanation, solution concept here.

\section*{Acknowledgments}
We would like to acknowledge the help of Tatjana \v{C}obit and Peter Occil in the verification of Lemma \ref{lem:uniform-density-approximation}.

\bibliographystyle{siamplain}


\end{document}